\newcommand{\R}{\mathbb{R}}
\newcommand{\C}{\mathbb{C}}
\newtheorem{lem}{Lemma}
\newtheorem{thm}{Theorem}
\theoremstyle{remark}
\newtheorem*{remark}{Remark}
\theoremstyle{definition}
\begin{document}
\date{October 1, 2022}
\title[An inequality of Bourgain]{A new proof of an inequality of Bourgain}
\author[P. Durcik]{Polona Durcik}
\address{Schmid College of Science and Technology, Chapman University, One University Drive, Orange, CA 92866, USA}
\email{durcik@chapman.edu}

\author[J. Roos]{Joris Roos}
\address{Department of Mathematical Sciences, University of Massachusetts Lowell, Lowell, MA 01854, USA}
\email{joris\_roos@uml.edu}

\begin{abstract}
The purpose of this short note is to demonstrate how some techniques from additive combinatorics recently developed by Peluse and Peluse-Prendiville can be applied 
to give an alternative proof for a trilinear smoothing inequality originally due to Bourgain. 
\end{abstract}

\maketitle

\section{Introduction}
Consider the trilinear form
 \[ \mathcal{I}(f_0,f_1,f_2) =  \Big|\iint f_0(x) f_1(x+t) f_2(x+t^2)\,\chi(t)\,dt\,dx\Big|,\]
where $\chi$ is a smooth and compactly supported function so that $0\le \chi\le 1$.
We give a new proof of the following trilinear smoothing inequality, first proved by Bourgain \cite{Bou88}.
\begin{thm}[Bourgain]\label{thm:main}
Let $K$ be a compact interval and suppose that $f_0\in L^\infty$ is supported in $K$ and $f_1, f_2\in L^2$. There exists an absolute constant $\sigma > 0$ such that
\begin{equation}\label{eqn:main}
\mathcal{I}(f_0,f_1,f_2)
 \leq C  \|f_0\|_\infty \|f_1\|_2 \|f_{2}\|_{H^{-\sigma}},
\end{equation}
where $C$ depends only on $K$ and $\chi$.
\end{thm}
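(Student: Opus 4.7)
My plan is to adapt to the continuous setting the ``degree-lowering'' strategy introduced by Peluse and developed with Prendiville for polynomial progressions in arithmetic settings. In rough outline: (i) Cauchy--Schwarz in $x$ uses the $L^\infty$-control and compact support of $f_0$ to reduce the problem to bounding $\|T(f_1,f_2)\|_2^2$, where $T(f_1,f_2)(x)=\int f_1(x+t) f_2(x+t^2) \chi(t)\,dt$; (ii) expanding this $L^2$-norm and passing to the frequency side on both $f_2$-factors produces a scalar oscillatory integral encoding the curvature of $t\mapsto t^2$; (iii) two-dimensional stationary phase yields power decay of the corresponding multiplier, which translates to the desired $H^{-\sigma}$-smoothing after a Littlewood--Paley decomposition of $f_2$.

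Concretely, expanding $\|T(f_1,f_2)\|_2^2$, setting $t_2=t_1+s$, and performing a translation in $x$ to remove the $t^2$-shift in the first $f_2$-factor gives
\[
\|T(f_1,f_2)\|_2^2=\iiint f_1(x+t-t^2)\,\overline{f_1(x+t-t^2+s)}\,f_2(x)\,\overline{f_2(x+2ts+s^2)}\,\chi(t)\chi(t+s)\,dx\,ds\,dt.
\]
Inserting Fourier inversions for the two $f_2$-factors and performing the $x$-integral yields the representation $\|T(f_1,f_2)\|_2^2=\iint \hat f_2(\eta)\,\overline{\hat f_2(\xi)}\,M(\eta,\xi)\,d\eta\,d\xi$, where
\[
M(\eta,\xi)=\iint \Phi(s,\eta-\xi)\,e^{2\pi i[(t^2-t)(\eta-\xi)-(2ts+s^2)\xi]}\,\chi(t)\chi(t+s)\,dt\,ds,
\]
and $\Phi(s,\zeta)=\int f_1(u)\,\overline{f_1(u+s)}\,e^{2\pi i u\zeta}\,du$ is the partial Fourier transform in $u$ of the short-scale correlation of $f_1$. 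The $(t,s)$-phase is a quadratic polynomial whose Hessian has determinant $-4\xi\eta$, so two-dimensional stationary phase (together with a non-stationary phase estimate outside the support of $\chi(t)\chi(t+s)$) yields $|M(\eta,\xi)|\lesssim \|f_1\|_2^2\,(1+|\xi\eta|)^{-1/2}$, with rapid decay in the complementary regime where the stationary point escapes the support.

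The main obstacle is upgrading this ``bilinear'' decay in the product $|\xi\eta|$ into a one-sided Sobolev smoothing of $f_2$; this is the continuous analogue of the degree-lowering step. I plan to decompose $f_2=\sum_{j\ge 0}\Delta_j f_2$ by Littlewood--Paley and control each dyadic pair $(j,k)$: when both frequencies are large, the stationary phase bound supplies decay $\lesssim \|f_1\|_2^2\,2^{-(j+k)/2}$, which more than suffices; when one frequency is small the two-dimensional bound degenerates and must be replaced by a one-dimensional van der Corput estimate (using either the $t$- or the $s$-oscillation in isolation) together with the $L^2$-control $\iint |\Phi(s,\zeta)|^2\,ds\,d\zeta=\|f_1\|_2^4$. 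Real interpolation between the trivial $L^2$-bound at low frequencies of $f_2$ and the oscillatory bound at high frequencies should then yield the claimed absolute $\sigma>0$.
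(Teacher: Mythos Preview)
Your proposal is labeled ``degree-lowering'' but is in fact a direct $TT^*$/stationary-phase argument; none of the Peluse--Prendiville machinery actually appears. More importantly, there is a genuine gap at the decisive step. The amplitude in your $(t,s)$-integral is $\Phi(s,\eta-\xi)\,\chi(t)\chi(t+s)$, and $\Phi(s,\zeta)=\int f_1(u)\overline{f_1(u+s)}\,e(u\zeta)\,du$ is only an $L^2$ function of $s$ (indeed you invoke $\iint|\Phi|^2=\|f_1\|_2^4$ yourself); it carries no smoothness or derivative bounds in $s$. Two-dimensional stationary phase therefore does not apply, and the asserted bound $|M(\eta,\xi)|\le C\,\|f_1\|_2^2\,(1+|\xi\eta|)^{-1/2}$ is unjustified. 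If instead you apply van der Corput only in $t$, where the amplitude \emph{is} smooth, you obtain decay of order $|\eta-\xi|^{-1/2}$; but this is useless on the diagonal $\eta=\xi$ and yields no gain when $\widehat{f_2}$ is supported in a single dyadic block $[\lambda,2\lambda]$, since both $\eta$ and $\xi$ then lie there. Integrating first in $s$ runs into the same non-smooth amplitude. So the scheme as written does not produce Sobolev smoothing on $f_2$, and the vague appeal to ``real interpolation'' at the end does not repair this.

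The paper's actual degree-lowering route is designed precisely to avoid this obstruction. One first controls $\mathcal{I}$ by a $u^3$-norm via PET induction: a Cauchy--Schwarz step converts the quadratic pattern into a four-term \emph{linear} pattern, to which Gowers differencing applies. The passage from $u^3$ down to $H^{-\sigma}$ then proceeds by (i) a dualization Cauchy--Schwarz replacing $f_0$ by the dual function $F_0(x)=\int \overline{f_1}(x+t)\overline{f_2}(x+t^2)\chi(t)\,dt$; (ii) a ``dual difference interchange'' (another Cauchy--Schwarz) pulling the operator $\Delta_h$ inside the $t$-integral; and (iii) a one-dimensional van der Corput estimate on the genuinely \emph{bilinear} form $\mathcal{I}(e_\xi,\Delta_h f_1,\Delta_h f_2)$, where one argument is a pure character and the amplitude is smooth. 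A final lemma converts $\int\|\Delta_h f_1\|_{H^{-1/2}}^2\,dh$ back to $\|f_1\|_{u^2}$, and one more iteration of the same ideas yields $\|f_2\|_{H^{-\sigma}}$. The structural point is that the rough factors $f_1,f_2$ are always stripped away by Cauchy--Schwarz \emph{before} any oscillatory integral estimate is invoked, so one never needs them to play the role of a smooth amplitude.
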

This note can be regarded as a case-study in how some of the ideas from recent breakthrough work by Peluse \cite{Pel19} 
and Peluse-Prendiville \cite{PP19} in additive combinatorics  
can be transferred to prove smoothing inequalities in real harmonic analysis. We have chosen the simplest non-trivial case to illustrate some key ideas. Various generalizations, in particular to the multilinear setting are contained in a (very recent) independent preprint \cite{KMPW}.
Several alternative proofs of Bourgain's smoothing inequality have already appeared at least implicitly in the literature (see \cite{Li13,Lie15,Chr20,CDR20}). This one is different in that most of its decisive steps are effected by the Cauchy-Schwarz inequality applied at the ``global'' scale  (as opposed to a microlocal scale).

While we are interested in smoothing inequalities in their own right, they also have some potential applications. In Bourgain's paper \cite{Bou88}, the smoothing inequality is used to prove a quantitative nonlinear Roth theorem in the reals. 
Specifically, Theorem \ref{thm:main} implies that for measurable $E\subset [0,1]$ with $|E|\ge \varepsilon$ there exist $x\in [0,1]$ and $t>\exp(-\exp(\varepsilon^{-c}))$ such that
\[ x,x+t,x+t^2\in E. \]
Here $c>0$ is an absolute constant. Various generalizations have been investigated, see e.g. \cite{Bou88,DGR16,CGL20,CDR20}. These can be viewed as analogues of quantitative Roth theorems in finite fields and the integers \cite{BC17,DLS20,Pel18,PP19,Pel20,PP22}.
The trilinear form is also closely related to a bilinear Hilbert transform with curvature, 
\[ (f_1,f_2)\mapsto \int f_1(x+t)f_2(x+t^2)\,\frac{dt}t \]
and an associated bilinear maximal function \cite{Li13,GL19} (also see  \cite{Lie15,Lie18,LX16,DD19,CDR20,CZ22} for variants and generalizations).

Trilinear and quadrilinear smoothing inequalities have been studied in a very general setting in recent work by Christ \cite{Chr20}, \cite{Chr22a}, \cite{Chr22b}. 

This note is structured as follows: after some preliminary reductions in \S \ref{sec:prelim} we discuss the four key lemmas and how to put them together in \S \ref{sec:deglow}. These lemmas are then proved in \S \ref{sec:pet}, \S \ref{sec:ddi}, \S \ref{sec:bilin}, \S \ref{sec:cdr}.

After completing our work we also learned that Ben Krause has also written on Bourgain's inequality with similar ideas in a chapter contained in his upcoming  book \cite{Kra23}.

\section{Preliminaries}\label{sec:prelim}
In this section we introduce some notation and preliminary reductions. Throughout, $C$ will denote a generic constant depending only on $K$ and $\chi$ which often changes from line to line.
For $x,\xi\in\R$ write
\[ e(x) = e^{2\pi i x},\; e_\xi(x) = e(\xi\cdot x) \]
and $\langle \xi\rangle = (1+|\xi|^2)^{1/2}$.
The Fourier transform will be defined as
\[ \widehat{f}(\xi) = \int f(x) e(-x \xi)\,dx \]
for $L^1$ functions $f:\R\to\C$.
Given $x, h\in\R$, define 
\[ \Delta_h f(x) = f(x)\overline{f(x+h)}. \]
If $h=(h_1,\dots,h_s)\in \R^s$, we let 
$\Delta_{h} f(x) = \Delta_{h_1} \cdots \Delta_{h_s} f(x).$
A measurable function $f$ will be called {\em 1-bounded} if $\|f\|_\infty\le 1$.
For integer $s\ge 0$ we define
\[ \|f\|_{u^{s+2}}^{2^s} = \int_{\R^s} \|\widehat{\Delta_h f}\|_{\infty}\,dh. \]
In particular, $\|f\|_{u^2} = \|\widehat{f}\|_\infty$.
The quantities $\|\cdot\|_{u^{s+2}}$ are essentially equivalent to the classical Gowers uniformity norms  $\|\cdot\|_{U^{s+2}}$    (see \cite{Gow98,Gow01}). Roughly speaking, one is small if and only if the other is (for $1$-bounded and compactly supported functions). 
In this note we will only use $s=0,1$, but the various ingredients readily generalize to all $s\ge 0$.
Also recall the definition of the Sobolev norm
\[\|f\|_{H^{-\sigma}}^2=\int_{\R} |\widehat{f}(\xi)|^2 (1+|\xi|^2)^{-\sigma} d\xi.\]

We claim that to prove Theorem \ref{thm:main}, it suffices to show that for $1$-bounded functions $f_0,f_1,f_2$ with $f_0$ supported in $K$ that
\begin{equation}\label{eqn:penult} \mathcal{I}(f_0,f_1,f_2) \le 
C \|f_2\|_{H^{-\sigma}}^c
\end{equation}
with $c$ an absolute constant. We may also assume without loss of generality that $f_1, f_2$ are supported in compact intervals $K_1, K_2$ depending on $K$ and $\mathrm{supp}\,\chi$, respectively (multiply $f_1,f_2$ with appropriate cutoff functions). 

\begin{proof}[Proof of Theorem \ref{thm:main} using \eqref{eqn:penult}]
By homogeneity, \eqref{eqn:penult} implies that if $\mathrm{supp}\, \widehat{f_2} \subset [\lambda,2\lambda]$ for $\lambda\ge 1$, then
\[  \mathcal{I}(f_0,f_1,f_2) \le C \lambda^{-c\sigma} \|f_0\|_\infty \|f_1\|_\infty \|f_2\|_{\infty}.\]
To obtain \eqref{eqn:main} we use Littlewood-Paley theory and interpolate with a bound such as
\[ \mathcal{I}(f_0,f_1,f_2)
\le \|f_0\|_\infty \int |f_1(x)| \Big(\int |f_2(x+t^2-t)| \chi(t)\,dt\Big)\, dx \le C\|f_0\|_\infty \|f_1\|_{3/2} \|f_2\|_{3/2},
\]
where we used H\"older's inequality and the fact that the averaging operator in this display acting on $f_2$ (at least) maps $L^{3/2}\to L^3$.
\end{proof}

\begin{remark}
An inspection of the argument we will give shows that the bound \eqref{eqn:main} also holds with the roles of $f_0,f_1,f_2$ on the right hand side permuted. 
\end{remark}

\section{Degree lowering}\label{sec:deglow}
Here we prove the estimate \eqref{eqn:penult}. 
The starting point of the analysis is to obtain control by the $u^3$ norm.
\begin{lem}\label{lem:pet}
Let $f_0,f_1,f_2$ be $1$-bounded and $f_0$   supported in a compact interval $K$. Then
\begin{equation*}
    \mathcal{I}(f_0,f_1,f_2) \le C \|f_0\|_{u^{3}}^{c_1},
\end{equation*}
where $c_1=\tfrac15$.
\end{lem}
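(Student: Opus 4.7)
The plan is to follow the Peluse--Prendiville ``PET'' (Polynomial Exhaustion) philosophy: since the pattern $(x,x+t,x+t^2)$ is quadratic in $t$, one round of Cauchy--Schwarz should reduce the trilinear form to a bilinear expression in which $f_0$ appears only through a single difference $\Delta_h f_0$; then Plancherel in the integration variable converts the residual bilinear form into an expression amenable to $L^\infty$-bounds on $\widehat{\Delta_h f_0}$, which is exactly the content of the $u^3$-norm.

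\textbf{First Cauchy--Schwarz.} I would substitute $y=x+t$ and write
\[
\mathcal{I}(f_0,f_1,f_2) = \Big|\int f_1(y) \int f_0(y-t)\, f_2(y+t^2-t)\,\chi(t)\,dt\,dy\Big|.
\]
Since $f_1$ is $1$-bounded with compact support, $\|f_1\|_2\le C$, so Cauchy--Schwarz in $y$ gives $\mathcal{I}^2 \le C\|G\|_2^2$ with $G(y)=\int f_0(y-t)f_2(y+t^2-t)\chi(t)\,dt$. Expanding $\|G\|_2^2$, substituting $s=t+h$, and changing variables $u=y-t$ yields
\[
\mathcal{I}^2 \le C\iiint \Delta_{-h}f_0(u)\,\Delta_{\alpha(t,h)}f_2(u+t^2)\,\chi(t)\chi(t+h)\,du\,dt\,dh,
\]
where $\alpha(t,h)=2th+h^2-h$. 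The crucial feature is that the shift $-h$ on $f_0$ does not depend on $t$, which is what lets us isolate $\widehat{\Delta_{-h}f_0}$ as a function of the single parameter $h$.

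\textbf{Fourier-side.} For each fixed $(t,h)$, Plancherel in $u$ converts the inner integral to $\int \widehat{\Delta_{-h}f_0}(\xi)\,\widehat{\Delta_{\alpha(t,h)}f_2}(-\xi)\,e(-\xi t^2)\,d\xi$. Integrating in $t$ against $\chi(t)\chi(t+h)$ and exchanging orders gives
\[
\mathcal{I}^2 \le C\iint \widehat{\Delta_{-h}f_0}(\xi)\,K(\xi,h)\,d\xi\,dh,
\quad K(\xi,h)=\int e(-\xi t^2)\widehat{\Delta_{\alpha(t,h)}f_2}(-\xi)\chi(t)\chi(t+h)\,dt.
\]
The quadratic phase $e(-\xi t^2)$ provides oscillatory gain in $t$; after Fourier-expanding $\widehat{\Delta_{\alpha(t,h)}f_2}(-\xi)$ in $\hat f_2$ the $t$-integral has a non-degenerate second derivative of magnitude $\sim |\xi|$, so van der Corput produces a decay factor of order $|\xi|^{-1/2}$ in $K$.

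\textbf{Extracting the $u^3$-norm.} To capture $\|f_0\|_{u^3}$ I would factor out $\|\widehat{\Delta_{-h}f_0}\|_\infty$ to a fractional power $\sigma\in(0,1)$, absorbing the remaining Fourier mass into an $L^2$-factor (using the trivial $\|\Delta_{-h}f_0\|_2\le C$), and then apply Hölder in $h$ with exponent $1/\sigma$ on this factor so that the surviving $\|\widehat{\Delta_{-h}f_0}\|_\infty$ integrates in $h$ to the first power, producing $\|f_0\|_{u^3}^{2\sigma}$. The residual factor depending on $K$ and $f_2$ must then be shown to be $O(1)$ using the van der Corput decay and the compact supports of $\chi, f_2$, and the choice $\sigma=1/5$ is the one that makes these competing losses balance. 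The main obstacle I anticipate lies here: the $|\xi|^{-1/2}$ gain from van der Corput is not integrable at infinity, so a dyadic decomposition separating small from large $|\xi|$ (and perhaps near-zero from bounded-away $|h|$, where the change of variable $t\mapsto\alpha(t,h)$ introduces a $|h|^{-1}$ Jacobian) will be needed, and carefully tracking these competing losses is what pins the final exponent to $c_1=\tfrac15$.
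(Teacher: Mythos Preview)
Your first Cauchy--Schwarz is applied to the wrong function. The PET move here is to eliminate the function sitting at the \emph{highest-degree} node, namely $f_2$ at $x+t^2$: the paper shifts $x\mapsto x-t^2$ so that $f_2$'s argument becomes $x$, applies Cauchy--Schwarz in $x$ to remove $f_2$, then shifts back. After this, every occurrence of $t^2$ cancels and one is left with a quadrilinear form in $f_0,f_1,\overline{f_0},\overline{f_1}$ whose pattern is \emph{linear} in $t$, with coefficients $0,\,1,\,-2h,\,1-2h$. A pigeonhole argument then fixes an $h$ for which these coefficients are $\gtrsim \mathcal{I}^2$--separated, and one further Cauchy--Schwarz (Gowers differencing for a linear four--term pattern) gives $\mathcal{I}_h\le C\,\delta^{-3/2}\|f_0\|_{u^3}$ with $\delta\sim\mathcal{I}^2$; combined with $\mathcal{I}^2\lesssim\mathcal{I}_h$ this yields $\mathcal{I}^5\lesssim\|f_0\|_{u^3}$, which is precisely where the exponent $c_1=\tfrac15$ comes from.

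By eliminating $f_1$ instead, you leave the quadratic $t^2$ in the $f_2$ factor, and the van der Corput step produces only $|\xi|^{-1/2}$. This is genuinely too weak to close: after Cauchy--Schwarz in $\eta$ you are left to control $\int_h\int_\xi |\widehat{\Delta_{-h}f_0}(\xi)|\,\min(1,|\xi|^{-1/2})\,d\xi\,dh$, and no interpolation between $\|\widehat{\Delta_{-h}f_0}\|_\infty$ and the trivial bound $\|\widehat{\Delta_{-h}f_0}\|_2\le C$ makes the tail $\int_{|\xi|>R}|\xi|^{-1/2}|\widehat{\Delta_{-h}f_0}(\xi)|\,d\xi$ convergent with a useful power of $\|\widehat{\Delta_{-h}f_0}\|_\infty$ (one would need decay $|\xi|^{-1-\varepsilon}$ for that). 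The dyadic decomposition and ``balancing'' you allude to in Step~5 do not repair this divergence, and the exponent $\tfrac15$ is asserted rather than derived. The missing idea is exactly the linearization above: kill $f_2$ first so that no oscillatory integral estimate is needed at this stage at all.
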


\begin{remark}
Versions of this lemma for general polynomials and a higher degree of multilinearity can be proved using the method of ``PET induction'' going back to Bergelson and Leibman \cite{BL96} 
(also see \cite{Pre17}).
\end{remark} 

\begin{lem}[Dual difference interchange]\label{lem:ddi1}  Let $(F_t)_{t\in\R}$ be a family of jointly measurable $1$-bounded functions $F_t:\R \to \C$ supported in a compact set $K\subset \R$, and $\chi$ a $1$-bounded smooth function supported on a compact interval. Define
\[F(x) =  \int F_t(x)\chi(t)\,dt.\]
Let $s=1$.  
Then there exists  a measurable map $\Phi:\R^s\to \R$ such that
\[ \|F \|_{u^{s+2}} \le C  \Big(\int \Big|\iint \Delta_{{h}} F_t(x) e(x\cdot \Phi({h}))\chi(t)\,dt\,dx\Big|\, d{h}\Big)^{2^{-2s}}. \]
\end{lem}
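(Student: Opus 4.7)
The plan is to combine measurable selection with three applications of Cauchy--Schwarz and a Plancherel identity in order to interchange the roles of the outer difference variable $h$ and the inner parameter $t$, converting the off-diagonal quantity $\widehat{\Delta_h F}$ into the diagonal object $K_u(x):=\int\chi(t)\Delta_u F_t(x)\,dt$ that appears on the right-hand side of the lemma. The scheme is a direct continuous analogue of the ``dual difference interchange'' step of Peluse and Peluse--Prendiville.

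First, I would select measurably $\Phi_0:\R\to\R$ with $|\widehat{\Delta_h F}(\Phi_0(h))|\ge \tfrac12\|\widehat{\Delta_h F}\|_\infty$; since the compact supports of $F_t$ and $\chi$ force $h$ into a bounded set, Cauchy--Schwarz in $h$ yields $\|F\|_{u^3}^4\le C\int|\widehat{\Delta_h F}(\Phi_0(h))|^2\,dh$. Next, expanding $F=\int F_t\chi(t)\,dt$ inside $\widehat{\Delta_h F}(\Phi_0(h))$ and applying Cauchy--Schwarz against the finite positive measure $\chi(s)\chi(t)\,ds\,dt$ (valid since $\chi\ge 0$), then opening the square and substituting $y=x+u$, collapses the $s,t$-integrals to the diagonal $K_u$ and produces
\[ |\widehat{\Delta_h F}(\Phi_0(h))|^2 \le C\iint e(u\Phi_0(h))\,K_u(x)\,\overline{K_u(x+h)}\,dx\,du. \]

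Integrating in $h$ and bringing the absolute value inside the $u$-integral by the triangle inequality reduces matters to the pointwise bound, for each fixed $u$,
\[ \Bigl|\iint e(u\Phi_0(h))\,K_u(x)\,\overline{K_u(x+h)}\,dx\,dh\Bigr| \le C\,\|\widehat{K_u}\|_\infty. \]
Plancherel in $x$ rewrites the left-hand side as $|\int|\widehat{K_u}(\xi)|^2\widehat{\phi_u}(\xi)\,d\xi|$, where $\phi_u(h)=e(u\Phi_0(h))$ is cut off to the compact $h$-support of the autocorrelation. Splitting $|\widehat{K_u}|^2=\widehat{K_u}\cdot\overline{\widehat{K_u}}$ and applying Cauchy--Schwarz in $\xi$ gives the bound $\|\widehat{K_u}\|_2\,\|\widehat{K_u}\|_\infty\,\|\widehat{\phi_u}\|_2\le C\,\|\widehat{K_u}\|_\infty$, using Plancherel for $\phi_u$ and the uniform $L^2$-estimates $\|K_u\|_2,\|\phi_u\|_2\le C$ that come from the compact supports.

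A final measurable selection produces $\Phi:\R\to\R$ with $\|\widehat{K_u}\|_\infty\le 2|\widehat{K_u}(-\Phi(u))|$; since $\widehat{K_u}(-\Phi(u))=\iint\Delta_u F_t(x)\,e(x\Phi(u))\,\chi(t)\,dt\,dx$, relabeling $u\to h$ yields the desired inequality. The key subtlety is the last Plancherel--Cauchy--Schwarz step: the naive bound $\|\widehat{\phi_u}\|_\infty\,\|\widehat{K_u}\|_2^2\le C$ gives no gain at all, and one must instead place the $L^\infty$ norm on one copy of $\widehat{K_u}$ and rely on the uniform $L^2$-control of $\phi_u$ (from the compact $h$-support of the autocorrelation) in order to extract the $\|\widehat{K_u}\|_\infty$-gain needed to match the right-hand side of the lemma.
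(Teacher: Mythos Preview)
Your argument is correct and yields the stated exponent $2^{-2}$, but it follows a genuinely different route from the paper's proof. The paper works at the level of $\|F\|_{u^3}^2=\int|\widehat{\Delta_h F}(\phi(h))|\,dh$, expands $\Delta_h F$ as a double $t',t$-integral, eliminates the $t'$-factor by the trivial $L^\infty$ bound, and then applies a \emph{single} Cauchy--Schwarz in $(t,x)$; this duplicates the $h$-variable, and after the change $x\mapsto x-h'$, $h\mapsto h+h'$ one obtains $\Delta_h F_t$ directly, with the explicit phase $\Phi(h)=\phi(h+h')-\phi(h')$ for a pigeonholed $h'$. Your proof instead squares first (Cauchy--Schwarz in $h$), then applies Cauchy--Schwarz in $(s,t)$ to duplicate the \emph{spatial} integral, so that the new difference variable $u$ arises from $y=x+u$; the price is an additional Plancherel--H\"older step in $\xi$ to extract $\|\widehat{K_u}\|_\infty$ from the remaining $h$-autocorrelation, followed by a second measurable selection. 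The paper's route is shorter and produces an explicit $\Phi$; yours is more symmetric in the two $t$-factors and makes the role of the autocorrelation $K_u(x)\overline{K_u(x+h)}$ transparent. One small caveat: your Cauchy--Schwarz against the measure $\chi(s)\chi(t)\,ds\,dt$ uses $\chi\ge 0$, which holds for the specific cutoff in the paper's introduction but is not literally part of the lemma's hypotheses; replacing $\chi$ by $|\chi|$ in that step (and hence in $K_u$) costs nothing and removes the issue.
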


\begin{remark}
The conclusion continues to hold for all $s\ge 0$.
\end{remark}

Lemma \ref{lem:ddi1}  is a version of Peluse's Lemma 5.1 \cite{Pel19} and Lemma 6.3 of Peluse and Prendiville \cite{PP19}. This lies at the core of the argument. To better understand   this inequality, one should unfold the the definition of the $u^{s+2}$ norm and linearize the supremum (see \eqref{eqn:ddi} below), revealing a symmetry between both sides. Also note that when expanding $\Delta_h F$ with $h\in\R^s$ on the left-hand side, one sees $2^s$ copies of the integral over $t$, whereas the right-hand side only involves one. The proof is based on the Cauchy-Schwarz inequality, see \S \ref{sec:ddi}.

\begin{lem}[Bilinear case]\label{lem:bilin}
Let $f_0, f_1,f_2$ be $L^2$ functions and $\xi\in\R$. Then
\[ \mathcal{I}(e_\xi, f_1, f_2) \le C \|f_1\|_{H^{-1/2}} \|f_2\|_2\quad\text{and}\quad \mathcal{I}(f_0, e_\xi, f_2) \le C \|f_0\|_2 \|f_2\|_{H^{-1/2}}. \]
\end{lem}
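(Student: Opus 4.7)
The plan is to Fourier-expand the two non-exponential factors in each inequality and perform the $x$-integration first. The $x$-integral produces a delta function forcing the three frequencies (including $\xi$) to sum to zero, reducing the three-dimensional integral $\mathcal{I}$ to a one-dimensional convolution against an oscillatory integral
\[ m_\xi(\eta) = \int e(\phi_{\xi,\eta}(t))\,\chi(t)\,dt, \]
where $\phi_{\xi,\eta}$ is a quadratic polynomial in $t$ depending on $\xi$ and one Fourier variable $\eta$. Once a pointwise estimate on $m_\xi$ is in hand, a single application of the Cauchy--Schwarz inequality produces bounds of the stated form.

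For the second inequality (with $f_1 = e_\xi$), Fourier-expanding $f_0$ and $f_2$ and integrating out $x$ gives phase $\phi(t) = \eta_2 t^2 + \xi t$, whose second derivative in $t$ equals $2\eta_2$. Van der Corput's lemma together with the trivial bound $|m_\xi|\le\|\chi\|_1$ then yields $|m_\xi(\eta_2)|\le C\langle\eta_2\rangle^{-1/2}$ uniformly in $\xi$, and Cauchy--Schwarz in $\eta_2$ gives the claimed bound $C\|f_0\|_2\|f_2\|_{H^{-1/2}}$.

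For the first inequality (with $f_0 = e_\xi$), the analogous Fourier expansion of $f_1, f_2$ yields the phase $\phi(t) = \eta t - (\xi+\eta) t^2$, whose second derivative is $-2(\xi+\eta)$. Van der Corput gives $|m_\xi(\eta)|\le C\langle\xi+\eta\rangle^{-1/2}$, but the stated norm $\|f_1\|_{H^{-1/2}}$ requires decay in $\eta$ itself. The key observation is that when $|\eta|\gg|\xi+\eta|$ the first derivative $\phi'(t) = \eta - 2(\xi+\eta)t$ is bounded below on the compact support of $\chi$, so integration by parts gives $|m_\xi(\eta)|\le C\langle\eta\rangle^{-1}$. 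Splitting into the two regimes $|\eta|\lesssim|\xi+\eta|$ and $|\eta|\gg|\xi+\eta|$, in each of which one of the two estimates dominates $C\langle\eta\rangle^{-1/2}$, yields the pointwise bound $|m_\xi(\eta)|\le C\langle\eta\rangle^{-1/2}$ uniformly in $\xi$; Cauchy--Schwarz in $\eta$ then finishes.

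The main obstacle is this last case split: the direct application of van der Corput produces decay in $\xi+\eta$, which is the wrong Sobolev variable for the $\|f_1\|_{H^{-1/2}}$ norm, so one has to complement it with a non-stationary phase estimate in the regime where the quadratic coefficient $\xi+\eta$ is small compared with the linear coefficient $\eta$. The rest is bookkeeping.
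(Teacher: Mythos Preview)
Your proposal is correct and follows essentially the same route as the paper: Fourier inversion reduces the form to a one-dimensional integral against an oscillatory integral in $t$, a pointwise bound $|m_\xi(\eta)|\le C\langle\eta\rangle^{-1/2}$ is established, and Cauchy--Schwarz finishes. The only cosmetic difference is that where you split into the regimes $|\eta|\lesssim|\xi+\eta|$ and $|\eta|\gg|\xi+\eta|$ and combine second-derivative van der Corput with non-stationary phase, the paper invokes a single packaged estimate (the Stein--Wainger variant $\bigl|\int e(a_1t+a_2t^2)\chi(t)\,dt\bigr|\le C\max(|a_1|,|a_2|)^{-1/2}$), which encapsulates exactly your case split.
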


Lemma \ref{lem:bilin} follows from standard oscillatory integral estimates of van der Corput-type (see \S \ref{sec:bilin}).

\begin{lem}\label{lem:miracle}
Let $s\ge 1$. For every $\sigma>0$ there exists $c=c_{s,\sigma}>0$ such that for $1$-bounded $f$ with support in a compact set $K$,
\[ \int_{\R^s} \|\Delta_h f\|_{H^{-\sigma}}^2\, dh \le C \|f\|^c_{u^{s+1}}. \]
One can take $c_{s,\sigma}=2^{s} \sigma (1+2\sigma)^{-1}$.
\end{lem}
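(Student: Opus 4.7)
The plan is to reduce the $\R^s$-integral to a one-dimensional integral in one coordinate via a Parseval-type identity, and then to handle the remaining integral over the other $s-1$ coordinates with an elementary H\"older step.

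Writing $h = (h', h_s) \in \R^{s-1}\times \R$ and setting $g = g_{h'} = \Delta_{h'} f$ (which is $1$-bounded and compactly supported uniformly in $h'$, thanks to the corresponding properties of $f$), the central ingredient is the identity
\[
\int_\R |\widehat{\Delta_{h_s} g}(\xi)|^2 \, dh_s \;=\; \int_\R |\widehat{g}(\xi+\eta)|^2\, |\widehat{g}(\eta)|^2 \, d\eta \;=:\; K_{h'}(\xi),
\]
obtained by expanding $|\widehat{\Delta_{h_s} g}(\xi)|^2$ into four copies of $g$, evaluating the $h_s$-integral as an autocorrelation of $\bar g$, and invoking Parseval. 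This readily gives $\int K_{h'} = \|g\|_2^4 \leq C$ together with $\|K_{h'}\|_\infty \leq \|\widehat{g}\|_\infty^2 \|g\|_2^2 \leq C \|\widehat{\Delta_{h'} f}\|_\infty^2$.

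Since $\int_\R \|\Delta_h f\|_{H^{-\sigma}}^2 \, dh_s = \int_\R \langle \xi\rangle^{-2\sigma} K_{h'}(\xi)\, d\xi$, splitting the $\xi$-integral at level $|\xi|=R$ and optimizing with $R \sim \|\widehat{\Delta_{h'} f}\|_\infty^{-2/(2\sigma+1)}$ yields
\[
\int_\R \|\Delta_h f\|_{H^{-\sigma}}^2 \, dh_s \;\leq\; C\, \|\widehat{\Delta_{h'} f}\|_\infty^{q}, \qquad q \;=\; \tfrac{4\sigma}{2\sigma+1}.
\]
Integrating in $h' \in \R^{s-1}$ (the integrand vanishes outside $(K-K)^{s-1}$) and invoking Jensen's inequality (when $q \leq 1$) or the trivial bound $\|\widehat{\Delta_{h'} f}\|_\infty \leq C$ (when $q \geq 1$), we may compare $\int \|\widehat{\Delta_{h'} f}\|_\infty^{q}\, dh'$ with $\int \|\widehat{\Delta_{h'} f}\|_\infty\, dh' = \|f\|_{u^{s+1}}^{2^{s-1}}$ at the cost of a constant, arriving at
\[
\int_{\R^s} \|\Delta_h f\|_{H^{-\sigma}}^2 \, dh \;\leq\; C\, \|f\|_{u^{s+1}}^{2^{s-1}\min(q,1)}.
\]

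Since $\min(q,1) \geq q/2 = 2\sigma/(1+2\sigma)$, the obtained exponent is at least $c_{s,\sigma} = 2^s \sigma/(1+2\sigma)$, and since $\|f\|_{u^{s+1}}$ is itself bounded by a constant depending only on $K$, the gap between the two exponents can be absorbed into the constant. The principal step is the Parseval identity defining $K_{h'}$, which is what allows us to trade an $L^\infty$-based $u^{s+1}$-quantity for a Sobolev-based $H^{-\sigma}$-quantity; once this is in place, the rest is routine level-set analysis and H\"older.
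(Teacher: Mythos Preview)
Your proof is correct and follows essentially the same route as the paper: both hinge on the Plancherel identity $\int_\R |\widehat{\Delta_{h_s} g}(\xi)|^2\,dh_s = \int |\widehat{g}(\xi-\eta)|^2|\widehat{g}(\eta)|^2\,d\eta$ applied with $g=\Delta_{h'}f$, followed by a frequency cutoff at level $R$ and optimization in $R$. The only difference is that you optimize $R$ for each $h'$ before integrating (and then use H\"older/Jensen in $h'$), whereas the paper integrates in $h'$ first and chooses a single global $R$; your ordering in fact yields the sharper exponent $2^{s-1}\min(q,1)$ with $q=4\sigma/(1+2\sigma)$, which you then relax to the stated $c_{s,\sigma}$.
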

Lemma \ref{lem:miracle} and its proof are morally similar to \cite[Lemma 3.1]{CDR20}. 
We  only need the case $s=1$, but we present the general case for future reference and since its proof comes at no additional difficulty (see \S \ref{sec:cdr}).

Let us see how these inequalities are used to prove Theorem \ref{thm:main}. 
The first goal is to perform a ``degree lowering'' from a $u^3$ bound as provided by Lemma \ref{lem:pet} to a $u^2$ bound of the form
\begin{equation*}
    \mathcal{I}(f_0,f_1,f_2) \le C  \|f_1\|_{u^{2}}^{c} 
\end{equation*}
for an absolute constant $c>0$ (and $C$ depending only on $K$ and $\chi$). We begin with a crucial ``dualization step'' effected by an application of the Cauchy-Schwarz inequality (as in \cite[Thm. 7.1]{PP19}):
\begin{equation}\label{eqn:deglowpf1}
\mathcal{I}(f_0, f_1, f_2) \le C\, \mathcal{I}(F_0, f_1, f_2)^{\frac12},
\end{equation}
where
\[ F_0(x) = \int \overline{f_1}(x+t) \overline{f_2}(x+t^2)\,\chi(t)\,dt. \]
Applying Lemma \ref{lem:pet} yields that the right-hand side of \eqref{eqn:deglowpf1} is
\[ \le C \|F_0\|_{u^3}^{c_1 2^{-1} }. \]
Using Lemma \ref{lem:ddi1} with $s=1$ shows that the previous display is
\begin{equation}\label{eqn:deglowpf2}
\le C \Big(\int \Big|\iint \Delta_{{h}} f_1(x+t) \Delta_h f_2(x+t^2) e(x\cdot \Phi({h})) \chi(t) \,dt\,dx\Big| \,d{h}\Big)^{c_1 2^{-3}}.
\end{equation}
The integral in the previous display is equal to
\[ \int \mathcal{I}(e_{\Phi(h)}, \Delta_h f_1, \Delta_h f_2)\,dh, \]
which by Lemma \ref{lem:bilin} and the Cauchy-Schwarz inequality is bounded by
\[ C \Big( \int \|\Delta_h f_1\|_{H^{-1/2}}^2 dh \Big)^{1/2}. \]
Using Lemma \ref{lem:miracle} with $s=1, \sigma=1/2$ we can combine this with \eqref{eqn:deglowpf2} to reach the inequality
\begin{equation}\label{eqn:deglowpf3} 
\mathcal{I}(f_0,f_1,f_2) \le  C \|f_1\|_{u^2}^{c_1 2^{-5}}.
\end{equation}

The final step is to   show that  \eqref{eqn:deglowpf3} implies the claim \eqref{eqn:penult}.
To do this we can simply repeat the same argument. By another application of the Cauchy-Schwarz inequality,
\begin{equation}\label{eqn:deglowpf4}
\mathcal{I}(f_0, f_1, f_2) \le C\,\mathcal{I}(f_0, F_1, f_2)^{\frac12},
\end{equation}
where 
\[ F_1(x) = \int \overline{f_0}(x-t) \overline{f_2}(x-t+t^2)\,\chi(t)\,dt. \]
Now \eqref{eqn:deglowpf3} implies that the right hand side of \eqref{eqn:deglowpf4} is
\[ \le C \|F_1\|_{u^2}^{c_1 2^{-6}}. \]
Recalling that $\|f\|_{u^2} = \|\widehat{f}\|_\infty$, we   consider
\[ |\widehat{F_1}(\xi)| = \Big|\iint e(x\cdot\xi) f_0(x-t) f_2(x-t+t^2)\,\chi(t)\,dt\,dx\Big| = \mathcal{I}(f_0, e_\xi, f_2),
 \]
 which by Lemma \ref{lem:bilin} is $\le C \|f_2\|_{H^{-1/2}}$.
This proves \eqref{eqn:penult} with $c=c_12^{-6}=\frac1{320}$.

\section{Proof of Lemma \ref{lem:pet}}\label{sec:pet}
We may assume that $\mathcal{I}=\mathcal{I}(f_0,f_1,f_2)>0$ since otherwise there is nothing to show.
Changing variables $x\mapsto x-t^2$, using Fubini's theorem to write the integral in $t$ as the innermost and applying the Cauchy-Schwarz inequality in $x$ we bound
\[ \mathcal{I}^2 \le |K_2|  \iiint f_0(x-t^2) f_1(x+t-t^2) \overline{f_0}(x-(t+h)^2) \overline{f_1}(x+(t+h)-(t+h)^2) \chi_1(t,h) \,dt\,dh\,dx, \]
where we used that $f_2$ is supported in $K_2$ and $\chi_1(t,h)  = \chi(t) \chi(t+h).$
Changing variables $x\mapsto x+t^2$ removes all occurrences of $t^2$ so that we are left to consider
\begin{equation}
    \label{eqn:pet-pf2} \iiint f_0(x) f_1(x+t) \overline{f_0}(x-2ht-h^2) \overline{f_1}(x+(1-2h)t+h-h^2) \chi_1(t,h)\,dt\,dx\,dh. 
    \end{equation}
\begin{remark}
The idea is now to fix an appropriate $h$ and view this as a quadrilinear form associated with a {\em linear} pattern. It is well-known that multilinear forms of this type can be controlled by a $u^{d-1}$ norm when $d$ is the degree of multilinearity (here $d=4$). This is also called {\em Gowers differencing} and uses the Cauchy-Schwarz inequality. In the continuous setting care must be taken because the process degenerates when the coefficients in the linear pattern are not well-separated.
\end{remark}

By Fubini's theorem and the triangle inequality,
\begin{equation*}
\int_{J-J} \mathcal{I}_h\, dh \ge \mathcal{I}^2 |K_2|^{-1}, 
\end{equation*}
where $J$ denotes the compact interval on which $\chi$ is supported and 
\[
    \mathcal{I}_h = \Big |  \iint f_0(x) f_1(x+t) \overline{f_0}(x-2th-h^2) \overline{f_1}(x+(1-2h)t+h-h^2) \chi_1(t,h)\, dt\,dx \Big|.
\]
Note that $\mathcal{I}_h \le |J| |K|$ since the integrand is $1$-bounded. 
Define $E$ as the set of $h\in J-J$ such that 
\begin{equation}
    \label{eqn:lowerbdIh}
    \mathcal{I}_h \ge C_1 \mathcal{I}^2,
\end{equation}
where $C_1 = \tfrac14 |K_2|^{-1} |J|^{-1}$.
Then we have
\[ \mathcal{I}^2 |K_2|^{-1} \le \int_{J-J} \mathcal{I}_h\,dh \le |E| |J| |K| + 2|J| C_1 \mathcal{I}^2. \]
This implies $|E|\ge C_2 \mathcal{I}^2$ with $C_2=\tfrac12 |K_2|^{-1} |J|^{-1} |K|^{-1}$.
This allows us to fix an $h\in E$ so that the coefficients $c_0=0,c_1=1,c_2=2h,c_3=1-2h$ (occurring as coefficients of $t$ in \eqref{eqn:pet-pf2}) are well-separated, say,
\[ \max_{0\le i<j\le 3} |c_i-c_j| \ge \tfrac1{100} C_2 \mathcal{I}^2. \]

\begin{lem}[Gowers differencing]\label{lem:gowers}
Assume $c_0=0,c_1,c_2,c_3\in \R$ satisfy 
\[M \ge \max_{0\le i<j\le 3} |c_i-c_j|\ge \delta>0.\]
for some $M\ge 1$ and $\delta\in (0,1)$.
Let $g_0,g_1,g_2,g_3$ be $1$-bounded functions and assume that $g_0$ is supported in a compact interval $K$. Let $\chi$ be a $1$-bounded smooth function supported in a compact interval $J$. Then
\begin{equation}\label{eqn:gowersdiffclaim}
\Big|\iint g_0(x) g_1(x+c_1 t) g_2(x+c_2 t) g_3(x+c_3t) \chi(t)\,dt\,dx\Big| \le C M \delta^{-3/2}  \|g_1\|_{u^3},
\end{equation}
where $C$ depends only on $K$ and $\chi$.
\end{lem}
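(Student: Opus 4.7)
The strategy is standard Gowers differencing: successive applications of Cauchy--Schwarz in $x$ to reduce the multilinearity, culminating in a Plancherel step that produces the Fourier supremum appearing in the $u^3$-norm.

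\emph{First Cauchy--Schwarz.} Write $\Lambda=\int g_0(x) F(x)\,dx$ with $F(x)=\int\prod_{i=1}^{3} g_i(x+c_i t)\chi(t)\,dt$. Since $g_0$ is $1$-bounded and supported in $K$, Cauchy--Schwarz in $x$ yields $|\Lambda|^2\le |K|\int_K |F|^2\,dx$. Expanding $|F|^2$ and substituting $h_1=t-s$ gives
$$|\Lambda|^2\le |K|\int\Lambda_1(h_1)\,dh_1,\qquad \Lambda_1(h_1)=\iint\prod_{i=1}^{3}\Delta_{-c_i h_1}g_i(x+c_i t)\,\chi(t)\chi(t-h_1)\mathbf{1}_K(x)\,dt\,dx,$$
a trilinear ``3-AP''-type form in the first-differenced functions $\Delta_{-c_i h_1}g_i$.

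\emph{Second Cauchy--Schwarz and Plancherel.} View $\Lambda_1(h_1)$ itself as a $3$-AP form with coefficients $c_1,c_2,c_3$ and apply the standard $u^2$-type control: shift $x\mapsto x-c_j t$ for $j\in\{2,3\}$ chosen so the corresponding gap is large, placing one factor alone at position $x$; the effective $x$-support (from $\mathbf{1}_K(x-c_j t)$) has measure $\lesssim M$, so another Cauchy--Schwarz in $x$ reduces to a bilinear form in $\Delta_{-c_1 h_1}g_1$ and one other first-differenced $g_k$. Conclude by Parseval on this bilinear form after isolating $\Delta_{-c_1 h_1}g_1$ at position $x$: with $\mathbf{1}_K$ replaced by a smooth majorant, the accompanying kernel has smooth, compactly supported structure, so its $L^1$-Fourier norm is finite and is controlled by a factor $\delta^{-1/2}$ arising from a substitution along the oscillatory direction (whose rate is one of the large gaps).

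\emph{Assembly.} Combining these with the change of variables $h_1\mapsto k=-c_1 h_1$ (Jacobian $|c_1|^{-1}\le\delta^{-1}$, after relabeling so the large gap involves~$c_1$), the $h_1$-integral becomes $\int\|\widehat{\Delta_k g_1}\|_\infty\,dk=\|g_1\|_{u^3}^2$. Collecting the powers of $M$ (one from each of the two Cauchy--Schwarz reductions) and of $\delta$ (one from each Jacobian and one from the Plancherel substitution) produces $|\Lambda|^2\le C\,M^2\delta^{-3}\|g_1\|_{u^3}^2$, which is the stated inequality after taking square roots.

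\emph{Main obstacle.} The delicate step is the second Cauchy--Schwarz coupled with Plancherel: one must simultaneously (a) peel off $g_2$ or $g_3$ while leaving $g_1$ carrying only its single $\Delta_{-c_1 h_1}$ from the first step, so that the Fourier sup at the end involves $\widehat{\Delta_k g_1}$ rather than a higher-order difference (this rules out the ``naive'' ordering that treats $g_1$ symmetrically); and (b) extract the correct $\delta^{-1/2}$ dependence in the $L^1$-Fourier bound on the kernel. Since the hypothesis provides only that the \emph{maximum} pairwise distance is at least $\delta$ (rather than all pairwise distances), the argument must include a relabeling/permutation step so that the specific gap actually used at each stage is guaranteed to be $\gtrsim\delta$; keeping track of this while preserving the single-$\Delta$ structure on $g_1$ is the conceptual heart of the proof.
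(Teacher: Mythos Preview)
Your first step (Cauchy--Schwarz in $x$ against $g_0$, producing a trilinear form in first differences $\Delta_{-c_i h_1}g_i$) matches the paper. The gap is in your second step. A second Cauchy--Schwarz in $x$ that ``peels off'' $g_j$ for $j\in\{2,3\}$ squares the remaining integrand and therefore puts a \emph{second} difference operator on $g_1$: what comes out is a bilinear form in $\Delta_{-c_{1j}h_2}\Delta_{-c_1 h_1}g_1$ and a doubly-differenced $g_k$, not the first-differenced objects you claim. Running this through the Fourier step yields $\int\|\widehat{\Delta_{(h_1,h_2)}g_1}\|_\infty\,dh_1\,dh_2=\|g_1\|_{u^4}^{4}$, i.e.\ a $u^4$ bound rather than $u^3$. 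Your ``Main obstacle'' paragraph correctly flags this danger but then treats it as a matter of choosing which of $g_2,g_3$ to peel off; that choice is irrelevant, since any physical-space Cauchy--Schwarz at this stage will square up $g_1$.

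The paper avoids the problem by performing only \emph{one} physical-space Cauchy--Schwarz. After the shift $x\mapsto x-c_1t$ (so that $\Delta_h g_1$ sits at position $x$) it writes the resulting trilinear expression directly on the Fourier side via Fourier inversion, obtaining a two-dimensional $\xi$-integral. It then extracts $\|\widehat{\Delta_h g_1}\|_\infty$ \emph{before} applying Cauchy--Schwarz in the remaining Fourier variable to bound the $g_2,g_3$ factors by $L^2$ norms. This Fourier-side ordering is exactly what allows $g_1$ to retain a single difference. The $\delta^{-1}$ factors arise from Jacobians of the substitutions $h\mapsto c_1^{-1}h$ and $(\xi_1,\xi_2)\mapsto(\xi_1,c_{31}\xi_1+c_{32}\xi_2)$ together with the rescaling inside the Fourier-side Cauchy--Schwarz; note that these use lower bounds on \emph{several} of the gaps $|c_i-c_j|$, not just the maximal one, so the relabeling issue you raise at the end is not the real difficulty.
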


\begin{remark}
By induction one can prove a corresponding statement   for higher degrees of multilinearity.
\end{remark}
Before proving Lemma \ref{lem:gowers}, let us see how to finish the proof of Lemma \ref{lem:pet}. Applying Lemma \ref{lem:gowers} (with $\delta=\frac1{100} C_2 \mathcal{I}^2$ and $M=4|J|+1$) to $\mathcal{I}_h$ and using \eqref{eqn:lowerbdIh} yields
\[ C_1 \mathcal{I}^2 \le \mathcal{I}_h
\le C\,\mathcal{I}^{-3} \|g_1\|_{u^3} \]
and therefore
$\mathcal{I} \le C \|g_1\|_{u^3}^{\frac15}$. (Recall that $C$ always denotes a constant depending only on $K$ and $\chi$ that may change from line to line.)

 \begin{proof}[Proof of Lemma \ref{lem:gowers}]
By multiplying $g_2,g_3$  with cutoff functions we   may assume that $g_2,g_3$ are  supported in an interval $\mathcal{J}$ of length $|\mathcal{J}|\le C\,M$.
Using Fubini's theorem and applying the Cauchy-Schwarz inequality in $x$, the left-hand side of \eqref{eqn:gowersdiffclaim} is
\[
\leq C \Big( \iiint \Big[\prod_{i=1}^3 g_i(x+c_it) 
\overline{g_i}(x+c_i (t+h)) \Big] \chi(t)\chi(t+h)\,dx\,dt\, dh\Big)^{\frac12}.
\]
Changing variables $x\mapsto x-c_1t$ and $h\mapsto c_1^{-1}h$,   the previous display is
\begin{equation}\label{eqn:gd-chvar}
\le C \delta^{-\frac12}
    \Big(\int \mathfrak{I}_{h}  \,  dh\Big)^{\frac12},
\end{equation}
where 
\[\mathfrak{I}_{h}=\Big|\iint \Delta_{h} g_1(x) \Delta_{d_{21}h} g_2(x+c_{21} t) \Delta_{d_{31} h} g_3(x+c_{31} t) \chi_{1,h}(t)\,dx\,dt\Big| \]
and $c_{ij}=c_i-c_j$, $d_{ij}=c_ic_j^{-1}$, $\chi_{1,h}(t)=\chi(t)\chi(t+c_1^{-1}h)$.
By the Fourier inversion formula, 
\[ \mathfrak{I}_h = \Big|\iint  \widehat{\Delta_{h} g_1}(\xi_1)\widehat{\Delta_{d_{21} h} g_2}(\xi_2)\widehat{\Delta_{d_{31} h} g_3}(-\xi_1-\xi_2)
\widehat{\chi_{1,h}}(-c_{21}\xi_2 + c_{31}(\xi_1+\xi_2))\,
d\xi_1\,d\xi_2\Big| .\]
Noting $-c_{21}\xi_2 + c_{31}(\xi_1+\xi_2) = c_{31}\xi_1 + c_{32} \xi_2$ and changing variables $(\xi_1, c_{31}\xi_1 + c_{32} \xi_2)\mapsto (\xi, \eta)$ gives
\begin{equation}
    \label{eqn:gowersdiff-pf2}
  \mathfrak{I}_{h} \le \delta^{-1}  \int \Big|\int \widehat{\Delta_{h} g_1}(\xi)\widehat{\Delta_{d_{21} h} g_2}(-c_{31}c_{32}^{-1}\xi+ c_{32}^{-1}\eta   )\widehat{\Delta_{d_{31} h} g_3}(c_{21}c_{32}^{-1}\xi - c_{32}^{-1}\eta) \,d\xi \Big| |\widehat{\chi_{1,h}}(\eta)| \,d\eta,
\end{equation}
where we also used that $c_{31}c_{32}^{-1}-1=c_{21}c_{32}^{-1}$.
For fixed $\eta$ we extract $\|\widehat{\Delta_{h} g_1}\|_\infty$ and use the Cauchy-Schwarz inequality and Plancherel's identity to estimate
\[\Big|\int \widehat{\Delta_{h} g_1}(\xi)\widehat{\Delta_{d_{21} h} g_2}(-c_{31}c_{32}^{-1}\xi+ c_{32}^{-1}\eta   )\widehat{\Delta_{d_{31} h} g_3}(c_{21}c_{32}^{-1}\xi - c_{32}^{-1}\eta) \,d\xi \Big|\]
\[\le \delta^{-1} M
\|\widehat{\Delta_{h} g_1}\|_\infty 
\|\Delta_{d_2h} g_2\|_2 \|\Delta_{d_3h} g_3\|_2
\le C \delta^{-1} M^2 \|\widehat{\Delta_{h} g_1}\|_\infty,
\]
where in the last step we used that $g_2,g_3$ are $1$-bounded and supported in $\mathcal{J}$.
Plugging this back into \eqref{eqn:gowersdiff-pf2}
we obtain
\[ \mathfrak{I}_h \le C \delta^{-2} M^2 \|\widehat{\Delta_{h} g_1}\|_\infty.\]
Combining this with \eqref{eqn:gd-chvar} finishes the proof.
\end{proof}

\section{Dual difference interchange: Proof of Lemma \ref{lem:ddi1}}\label{sec:ddi}
Expanding out the definitions of the $u^{3}$-norm and the Fourier transform and linearizing the supremum, it suffices to show that
for every measurable $\phi:\R\to \R$ 
there exists a measurable map $\Phi:\R\to \R$ only depending on $\phi$ such that
\begin{equation}\label{eqn:ddi}
\int  \Big| \int  \Delta_{h} F (x) e(x\cdot \phi({h})) \, dx\Big|\, d{h}
\leq C \Big(\int  \Big|\iint \Delta_{{h}} F_t(x) e(x\cdot \Phi({h}))  \chi(t)\,dt \,dx\Big|\, d{h}\,\Big)^{2^{-1}}.
\end{equation}
 We expand
 \[ \Delta_h F(x) =   \iint  F_{t'}(x) \overline{F_{t}(x+h)}\chi(t')\chi(t)\, dt'\,dt .\]
Denoting the phase of the inner integral by $e(\Psi(h))$ and using Fubini's theorem, we rewrite the left-hand side of \eqref{eqn:ddi} as
\[     \iint \Big[ \int {F_{t'}(x)}\chi(t') dt'\Big] \Big[ \int   \overline{F_t(x+h)} e(x\cdot \phi(h)) e(\Psi(h)) \,dh \Big] \chi(t)  \,dt\,dx  .\]
Using the estimates $|F_{t'}(x)|\le 1,|\chi(t')|\le 1$ to eliminate the $t'$-integral, followed by the Cauchy-Schwarz inequality in $(t,x)$,  Fubini's theorem, and the triangle inequality, the last display is
\[ \leq C \Big(\int_{J_1} \int  \Big|\iint \overline{F_t(x+h)} F_t(x+h')
e(x\cdot (\phi(h) - \phi(h'))) \chi(t)\,dt\,dx\Big|   \, dh \,dh' \Big)^{1/2}, \]
where $J_1=J-J$ and $J$ denotes the compact interval on which $\chi$ is supported.
Changing variables $x\mapsto x-h'$ and $h\mapsto h+h'$ gives 
\[ C  \Big(\int_{J_1} \int  \Big|\iint  \Delta_{h} F_t(x)
 e(x\cdot (\phi(h+h') - \phi(h')))\chi(t) \,dt \,dx\Big| \, dh \,dh' \Big)^{1/2}. \]
 Fixing an $h'\in J_1$ for which the supremum   of the integrand in $h'$ is almost attained and setting
\[\Phi(h)=\phi(h+h') - \phi(h'), \] we obtain \eqref{eqn:ddi}.

\section{Bilinear case: Proof of Lemma \ref{lem:bilin}}\label{sec:bilin}
The proofs for the two claimed inequalities are identical up to purely notational modifications. Therefore we only give the proof of the first inequality. By density arguments we may assume that $f_1,f_2$ are test functions.
By the Fourier inversion formula,
\begin{equation}\label{eqn:bilinpf1} \mathcal{I}(e_\xi, f_1, f_2) = \Big| \int 
\widehat{f_1}(\eta) \widehat{f_2}(-\eta-\xi)
\Big[ \int e(t\eta - (\eta+\xi)t^2) \chi(t)\,dt \Big]
\,d\eta
\Big|.
\end{equation}
A well-known variant of van der Corput's lemma (see e.g. \cite[Prop. 2.1]{SW01}) states that for real numbers $a_1,a_2$,
\[
\Big| \int e(a_1 t + a_2t^2) \chi(t)\,dt \Big| \le C \max(|a_1|,|a_2|)^{-\frac12}.
\]
Applied to the $t$-integration in \eqref{eqn:bilinpf1}, this implies
\[ \mathcal{I}(e_\xi ,f_1,f_2) \le C
\int 
|\widehat{f_1}(\eta)\langle\eta\rangle^{-\frac12} \widehat{f_2}(-\eta-\xi)|\,
d\eta.
\]
By the Cauchy-Schwarz inequality the previous display is no greater than $C \|f_1\|_{H^{-1/2}} \|f_2\|_2$ as desired.

\section{Proof of Lemma \ref{lem:miracle}}\label{sec:cdr}
The proof uses an idea from \cite[Lemma 3.1]{CDR20}.
Let $R>0$ be a constant which will be determined later.
We write the left-hand side as
\[ \int_{\R^s} \int_{\R} |\widehat{\Delta_h f}(\xi)|^2 (1+
|\xi|^2)^{-\sigma} \, d\xi\, dh   \le \int_{\R^s} \int_{|\xi|\leq R} |\widehat{\Delta_h f}(\xi) |^2 \, d\xi\, dh + C R^{-2\sigma}, 
 \]
where we used that
\[\int_{\R^s}  \int_{|\xi|>R} |\widehat{\Delta_h f}(\xi) |^2 (1+
|\xi|^2)^{-\sigma} \, d\xi\, dh \leq  R^{-2\sigma}\int_{\R^s} \int_{\R} |\Delta_h f(x)|^2   \, dx\, dh   \le C R^{-2\sigma}.    \]
Here we used Plancherel's identity and the assumption that $f$ is $1$-bounded and supported in $K$.
Next, we write $h=(h_1,h')\in \R\times \R^{s-1}$ and claim that
\begin{equation}\label{eqn:cdrpf1}
\int_{\R^s}  \int_{|\xi|\leq R} |\widehat{\Delta_h f}(\xi) |^2   \, d\xi\, dh   \leq C \int_{\R^{s-1}} \sup_I \int_{I} |\widehat{\Delta_{h'} f }(\xi)|^2 \, d\xi\, dh', 
\end{equation}
where the supremum is over all compact intervals $I$ of length $2R$. Indeed, the claim follows from
\[\int_{\R} \int_{|\xi|\leq R} |\widehat{\Delta_{h_1} g}(\xi)|^2 \, d\xi \, dh_1 = \iint_{|\xi+\xi'|\leq R} |\widehat{g}(\xi)|^2 |\widehat{g}(\xi')|^2 \,d\xi\,d\xi' \leq \|g\|_2^2\, \sup_I \int_I |\widehat{g}|^2  \]
applied with $g=\Delta_{h'}f$.
Using $\|\widehat{\Delta_{h'}f}\|_\infty\leq C$, the right-hand side of \eqref{eqn:cdrpf1} is bounded by
\[ C\, R \int_{\R^{s-1}} \|\widehat{\Delta_{h'} f} \|_\infty\, d{h'} = C\,R \|f\|^{2^{s-1}}_{u^{s+1}}. \]
In summary, we have proved
\[\int_{\R^s} \|\Delta_h f\|_{H^{-\sigma}}^2\, dh \le  C\,R \|f\|^{2^{s-1}}_{u^{s+1}} + C\,R^{-2\sigma}.\] 
Setting $R=\|f\|^{-2^{s-1}\tau}_{u^{s+1}}$ and $\tau=(2\sigma+1)^{-1}$ we get
\[ \int_{\R^s} \|\Delta_h f\|_{H^{-\sigma}}^2\, dh \le C \|f\|_{u^{s+1}}^{2^s \sigma \tau}. \]
 
\noindent {\bf Acknowledgments.} 
This work is partially supported by NSF grants DMS-2154356 (P.D.) and DMS-2154835 (J.R.) and a grant by the Simons Foundation (ID 855692, J.R.).  This work is  partially supported by the NSF grant DMS-1929284 while P.D. was in residence at the Institute for Computational and Experimental Research in Mathematics in Providence, RI, during the Harmonic Analysis and Convexity program.

\end{document}